\documentclass[letterpaper, 10 pt, conference]{ieeeconf}  % Comment this line out if you need a4paper
\usepackage[T1]{fontenc}
\usepackage{anyfontsize}
\usepackage{xcolor}

\IEEEoverridecommandlockouts                              % This command is only needed if 
   \usepackage{amsmath,amsthm,amssymb}

\newtheorem{thm}{Theorem}
\newtheorem{cor}{Corollary}
\newtheorem{lemma}{Lemma}

\theoremstyle{definition}
\newtheorem{defn}{Definition}
\newtheorem{rmk}{Remark}

\newcommand{\tA}{\tilde{A}}
\newcommand{\tB}{\tilde{B}}
\newcommand{\tC}{\tilde{C}}
\newcommand{\tD}{\tilde{D}}

\renewcommand{\Re}{\text{Re}}% \,}
\title{\LARGE \bf 
$H_\infty$ Minimal Destabilizing Feedback for \\ Vulnerability Analysis and Attack Design of Nonlinear Systems*
}

\author{Gavin Glenn$^{1}$ and Emma J. Reid$^{1}$% <-this % stops a space
\thanks{*Notice: Office of Science of the U.S. Department of Energy. This manuscript has been authored by UT-Battelle, LLC, under contract DE-AC05-00OR22725 with the US Department of Energy (DOE). The US government retains and the publisher, by accepting the article for publication, acknowledges that the US government retains a nonexclusive, paid-up, irrevocable, worldwide license to publish or reproduce the published form of this manuscript, or allow others to do so, for US government purposes. DOE will provide public access to these results of federally sponsored research in accordance with the DOE Public Access Plan (http://energy.gov/downloads/doe-public-access-plan). This work is sponsored by the Department of Homeland Security (DHS) Science and Technology Directorate. Any opinions contained herein are those of the author and do not necessarily reflect those of DHS S\&T, UT-Battelle, or DOE}
\thanks{$^{1}$Cyber Resilience and Intelligence Division, Oak Ridge National Laboratory,
        1 Bethel Valley Rd, Oak Ridge, TN 37830, USA
        {\tt\small glennga@ornl.gov}}%
}

\begin{document}

\maketitle
\thispagestyle{empty}
\pagestyle{empty}
%%%%%%%%%%%%%%%%%%%%%%%%%%%%%%%%%%%%%%%%%%%%%%%%%%%%%%%%%%%%%%%%%%%%%%%%%%%%%%%%
\begin{abstract}

The robust stability problem involves designing a controlled system which remains stable in the presence of modeling uncertainty. In this context, results known as small gain theorems are used to quantify the maximum amount of uncertainty for which stability is guaranteed. These notions inform the design of numerous control systems, including critical infrastructure components such as power grids, gas pipelines, and water systems. However, these same concepts can be used by an adversary to design a malicious feedback attack, of minimal size, to drive the closed-loop system to instability. In this paper, we first present a detailed review of the results in robust control which allow for the construction of minimal destabilizers. These minimally sized attacks merely push the system to the stability boundary, which we demonstrate {\em do not necessarily destabilize nonlinear systems} even when the linearization is destabilized. Our main result leverages linear perturbation theory to {\em explicitly} prove, in the state space context, that internal destabilization is guaranteed for a broad class of nonlinear systems when the gain of these attacks is slightly increased.

\end{abstract}

%%%%%%%%%%%%%%%%%%%%%%%%%%%%%%%%%%%%%%%%%%%%%%%%%%%%%%%%%%%%%%%%%%%%%%%%%%%%%%%%
\section{INTRODUCTION}

Robust control seeks to design a feedback controller which guarantees system stability in the presence of modeling uncertainty \cite{ZhouTextbook}. However, an adversary can also use the very same design principles to exploit a system. By leveraging or creating feedback within the system, they can achieve a variety of undesirable outcomes. The effects of feedback attacks are further exacerbated by the connectivity of modern systems, making it difficult to eliminate this attack vector. In \cite{Rai2012, Chetty, ChettyBook_2020}, the authors formulated the concept of minimally sized feedback destabilization attacks using the dynamical structure function introduced in \cite{GoncalvesDSF}. This theory was later applied to destabilize models of power grids \cite{TackettPower}, social networks \cite{Rogers2025}, and water distribution systems \cite{GrimsmanWater}. 

These prior works employ a frequency domain formulation, but there are advantages to working in state space. The dynamics of many critical systems can be naturally described by physical state relationships. In particular, vulnerabilities can be linked to physical quantities within the system, leading to actionable mitigation strategies. Additionally, working in state space allows us to give a careful description of the stability picture even when the system is nonlinear. 

Our main contributions are three-fold: 
\begin{itemize}
\item First, we review and assemble results from robust control theory into a general framework for the design of minimal destabilization attacks, with a focus on state space formulations, applicable to nonlinear infrastructure systems. 
\item Second, we demonstrate that, while minimal feedback attacks push the system to the stability boundary, they need not lead to instability of the underlying state space model when the system is nonlinear, {\em even though instability is guaranteed for linear systems}. 
\item Finally, we prove that when the gain of a minimal feedback attack is slightly increased, then internal destabilization is guaranteed, even in the nonlinear case. Although implicitly obvious, this result appears to require a nontrivial proof, which we provide. Our argument depends on a key regularity result for the unstable eigenvalue near the imaginary axis.

\end{itemize}

\section{PROBLEM FORMULATION}

Consider a nonlinear dynamic system given by:
\begin{equation}\label{orgnonlin}
    \begin{aligned}
        \dot x &= \bar{f}(x,u,w)\\
        y &= g_y(x,u,w) \\
        r &= g_r(x,u,w),
    \end{aligned}
\end{equation}
with $x(t) \in \mathbb{R}^n$ representing internal states, $u(t) \in \mathbb{R}^{k}$ describing nominal control inputs from system operators, $w(t) \in \mathbb{R}^m$ modeling an adversary's capability to input signals into the system, $y(t) \in \mathbb{R}^{l}$ representing 
measurements collected by system operators, and $r(t) \in \mathbb{R}^p$ being signals which are visible to the adversary.  This system might model the dynamics of a large-scale, complex infrastructure system, such as a network of gas pipelines, a power system, a water treatment facility, or interconnections of all the above. Unlike typical control-oriented models, this model not only considers paths of nominal use, through $u$ and $y$, but also paths of possible {\em misuse}, through $w$ and $r$.

We assume that system managers have already designed a suitable feedback controller mapping measurements $y(t)$ to controls $u(t)$, yielding the system
\begin{equation}\label{nonlin}
    \begin{aligned}
        \dot x &= f(x,w)\\
        r &= g(x,w).
    \end{aligned}
\end{equation}
We refer to \eqref{nonlin} as the \textit{exposed system}, which is the starting point for our analysis. It is useful to distinguish the special case of no adversarial input $w\equiv0$, in which case we refer to \eqref{nonlin} as the \textit{nominal system}. 

We next review relevant notions of stability.
\begin{defn}\label{stability}
    Consider the autonomous system 
    \begin{equation}
        \dot x = F(x).
    \end{equation}
    Suppose that $F(x_e) = 0$. Then the equilibrium $x_e$ is 
    \begin{itemize}
        \item Lyapunov stable if for every $\epsilon > 0$ there exists $\delta >0$ such that $|x(t) - x_e| < \epsilon$ for each $t \ge 0$ whenever $|x(0) - x_e| < \delta$.
        \item Asymptotically stable if it is Lyapunov stable and if there exists $\delta >0$ such that $x(t) \to x_e $ as $t \to \infty$ whenever $|x(0) - x_e| < \delta$.
        \item Unstable if it is not Lyapunov stable.
    \end{itemize}
\end{defn}
We assume that the nominal system is asymptotically stable at an equilibrium point $x_e$, which we may take to be the origin without loss of generality. Similarly, we may assume that the nominal system provides zero output at equilibrium: $g(0,0) = 0$. Finally, we will assume that $f(x,w)$ is sufficiently regular (say, $C^1$ in a neighborhood of $(x,w) = (0,0)$ ) and that
\begin{equation*}
    \dot x = f(x,w), \quad x(0) = x_0
\end{equation*}
admits a unique bounded solution for all time whenever $x_0$ and $w(t)$ are small enough.

After linearizing \eqref{nonlin} about $(x,w)=(0,0)$ we obtain %the near steady-state approximation
\begin{equation}\label{lti}
    \begin{aligned}
        \dot x &= Ax + Bw \\
        r &= Cx + Dw,
    \end{aligned}
\end{equation}
which is linear and time-invariant (LTI). We refer to this system as $G$, and use $G(s)$ to denote its  $p \times m$ transfer function
\begin{equation}
    G(s) = C(sI-A)^{-1}B + D, \quad s \in \mathbb{C} - {\text{spec}(A)}.
\end{equation}
When $x(0)=0$, then under suitable assumptions on $w(t)$, the function $G(s)$ maps the exposed inputs to the exposed outputs via multiplication in the Laplace domain: $R(s) = G(s)W(s)$. We will assume that $G(s)$ is not identically zero, which means that the attacker can observe an effect of at least one nonzero input. 

From Lyapunov's Indirect Method, we conclude that the eigenvalues of $A$ (and thus the poles of $G$) have non-positive real part (see e.g. \cite{CoddingtonODE}, \cite{HartmanODE}). 

\begin{thm}\label{stability_thm}
    {\rm\bf  [Lyapunov's Indirect Method]}  Consider $$\dot x = F(x),$$ where $F$ is continuously differentiable with an equilibrium point $F(x_e) = 0$. Let $A = D_x F(x_e)$ be the Jacobian of $F$ with respect to $x$. 
    \begin{itemize}
        \item If all eigenvalues of $A$ have negative real part, then the equilibrium $x_e$ is asymptotically stable. 
        \item If $A$ has at least one eigenvalue with positive real part then the equilibrium $x_e$ is unstable. 
    \end{itemize}
\end{thm}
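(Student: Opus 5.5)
Without loss of generality I take $x_e = 0$ and write $F(x) = Ax + h(x)$, where $h(x) = F(x) - Ax$. Since $F$ is $C^1$ with $F(0)=0$ and $D_xF(0)=A$, the remainder satisfies $|h(x)| = o(|x|)$ as $x \to 0$. More precisely, for every $\eta>0$ there is $r>0$ such that $|h(x)| \le \eta |x|$ whenever $|x| \le r$. Both parts of the theorem will follow by comparing the nonlinear flow with the linear flow through a quadratic Lyapunov function built from $A$. The remainder $h$ is then absorbed by choosing the neighborhood small.

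For the first part, assume every eigenvalue of $A$ has negative real part. Then the Lyapunov equation $A^TP + PA = -I$ has the unique solution
\begin{equation*}
P = \int_0^\infty e^{A^T t} e^{At}\,dt,
\end{equation*}
which is symmetric positive definite because $\|e^{At}\|$ decays exponentially. Set $V(x) = x^TPx$. Along trajectories,
\begin{equation*}
\dot V = -|x|^2 + 2x^TPh(x) \le -|x|^2 + 2\|P\|\eta |x|^2 .
\end{equation*}
Choosing $\eta = 1/(4\|P\|)$ gives $\dot V \le -\tfrac12 |x|^2 \le -c V$ on the ball $|x| \le r$, with $c = 1/(2\lambda_{\max}(P))$.

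Next I take a sublevel set $\{V \le \alpha\}$ contained in that ball. This set is forward invariant, so $V(x(t)) \le e^{-ct}V(x(0))$ for every trajectory starting in it. Lyapunov stability follows from the sandwich $\lambda_{\min}(P)|x|^2 \le V \le \lambda_{\max}(P)|x|^2$, and the exponential decay gives attractivity, so $0$ is asymptotically stable.

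For the second part, assume $A$ has an eigenvalue with positive real part. I split $\mathbb{R}^n = E_+ \oplus E_-$, where $E_+$ is the real invariant subspace of eigenvalues with $\Re\lambda > 0$ and $E_-$ is the one for $\Re\lambda \le 0$. In adapted coordinates $A = \mathrm{diag}(A_+, A_-)$. I choose $\sigma>0$ smaller than $\min \Re \lambda(A_+)$ and shift $A_-$ by $\sigma$, so that $A_+ - \sigma I$ is anti-stable and $A_- - \sigma I$ is stable. This gives $P_\pm > 0$ with
\begin{equation*}
(A_+-\sigma I)^TP_+ + P_+(A_+-\sigma I) = I, \qquad (A_--\sigma I)^TP_- + P_-(A_--\sigma I) = -I.
\end{equation*}
With these I define the indefinite form $W(x) = x_+^TP_+x_+ - x_-^TP_-x_-$. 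A direct computation gives
\begin{equation*}
\dot W = 2\sigma W + |x|^2 + O(\eta|x|^2) \ge 2\sigma W + \tfrac12 |x|^2
\end{equation*}
for $|x| \le r$ with $\eta$ small. This is a Chetaev-type function.

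To conclude instability, I take initial data $x_0 \in E_+$ arbitrarily close to $0$, so that $W(x_0)>0$. As long as the trajectory stays in the ball $\{|x|\le r\}$, the value $W$ grows at least like $e^{2\sigma t}W(x_0)$. But $W$ is bounded on that ball, so the trajectory must exit it. That contradicts Lyapunov stability with $\epsilon = r$.

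The main obstacle is this second case, specifically constructing the indefinite quadratic form when $A$ also has eigenvalues on the imaginary axis. A plain splitting into unstable and stable parts fails there, because the center part gives no strict inequality. The $\sigma$-shift is what fixes it: it moves the center eigenvalues strictly into the left half-plane for the shifted matrix, while keeping the unstable block anti-stable. The remaining risk is the cross-term bookkeeping in $\dot W$ between $x_+$ and $x_-$, but in block-diagonal coordinates these terms appear only through $h$ and are absorbed by the choice of $\eta$.
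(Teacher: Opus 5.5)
The paper does not prove this theorem. It quotes it as a classical fact with references to \cite{CoddingtonODE} and \cite{HartmanODE}, so there is no in-paper argument to compare against.

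Your proof is correct and follows a standard self-contained route. For the stable case you use the quadratic Lyapunov function from $A^TP+PA=-I$. For the unstable case you use a Chetaev-type indefinite form $W=x_+^TP_+x_+-x_-^TP_-x_-$ together with the spectral shift by $\sigma$. The shift is the right device: the instability clause puts no hypothesis on the rest of the spectrum, so eigenvalues on the imaginary axis are allowed. That is exactly the generality the paper needs in Corollary~\ref{cor:destab}, where only one closed-loop eigenvalue is known to lie in the open right half-plane.

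A common alternative is the variation-of-constants formula plus Gr\"onwall with $\|e^{At}\|\le Ke^{-\mu t}$ for stability, and a near-Jordan change of coordinates with $|x_+|^2-|x_-|^2$ for instability. Your Lyapunov-equation version avoids the Jordan-form bookkeeping and yields explicit Lyapunov and Chetaev functions.

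Two routine points deserve a sentence when you write it out.
\begin{itemize}
\item $|x|^2=|x_+|^2+|x_-|^2$ is the Euclidean norm in the adapted coordinates. The bound $|h(x)|\le\eta|x|$ must therefore be transported through the change of basis $T$, costing a factor $\|T\|\,\|T^{-1}\|$ in $\eta$. You should also note that instability is invariant under this linear change of coordinates.
\item In the exit argument, say that a solution confined to the compact ball exists for all $t\ge 0$. Only then does $e^{-2\sigma t}W(x(t))\ge W(x_0)>0$ (from $\dot W\ge 2\sigma W$) genuinely contradict boundedness of $W$ on the ball.
\end{itemize}
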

\begin{rmk}
    We will assume that all the eigenvalues of $A$ in \eqref{lti} have negative real part, which is standard for a controlled system \eqref{nonlin}. For particular systems with eigenvalues on the imaginary axis, higher order analysis with LaSalle's invariance principle and Lyapunov direct methods can be used to assess stability. %We defer these considerations to future work.
\end{rmk}
The goal of the adversary is to interconnect their system, referred to as $\Delta$, with the exposed one in such a way that instability of the closed-loop is guaranteed. This can be compared with the classical feedback \textit{stabilization} problem (\cite{ZhouTextbook},  \cite{DullerudPaganiniTextbook}, \cite{DahlehTextbook}), where one starts with an unstable system and seeks to design a controller for which the closed-loop is stable, possibly subject to additional constraints. The analysis of destabilization proceeds similarly. While robust control theory allows one to consider more complicated attacks, it turns that LTI $\Delta$ are sufficient for destabilization and can be made minimal in a precise sense. Therefore, the system $\Delta$ will be given by
\begin{equation}\label{delta_realization}
    \begin{aligned}
        \dot{\tilde{x}} &= \tA \tilde{x} + \tB r \\
        w &= \tC \tilde{x} + \tD r,
    \end{aligned}
\end{equation}
with the $m \times p$ transfer function
\begin{equation}
    \Delta(s) = \tC (sI-\tA)^{-1} \tB + \tD.
\end{equation}
As a practical matter, LTI $\Delta$ are among the easiest attacks to implement, requiring only the four real-valued matrices in \eqref{delta_realization}. Of course, the adversary will require that their own system be internally stable. Thus, we will assume that $\tA$ is chosen with all eigenvalues having negative real part.

The feedback interconnection of $\Delta$ with $G$ is obtained by setting the input $r$ of $\Delta$ equal to the output $r$ of $G$ in \eqref{lti}. Similarly, the input $w$ of $G$ is set equal to the output $w$ of $\Delta$ in \eqref{delta_realization}. For simplicity, assume that $D=0$. The general case is treated very similarly, but one needs to add the well-posedness assumption
\begin{equation}\label{WP}
    \det(I - \tilde{D}D) = \det (I - \Delta(\infty) G(\infty)) \neq 0,
\end{equation}
for the linear interconnection. For the nonlinear closed loop to remain well-posed, additional assumptions are required. A sufficient condition is that $\sup_{x,u}|| \tilde{D} \frac{\partial g}{\partial u}(x,u)||_2 < 1$ which guarantees that $I - \tilde{D} \frac{\partial g}{\partial u}(x,u)$ remains invertible. Alternatively, $\tilde{D} = 0$ is sufficient, and we demonstrate how this can be achieved in Section \ref{sec_constr}. We will therefore assume that one of these conditions holds so that the nonlinear closed loop is robustly well-connected. 

In the present case, the dynamics of the interconnection are given by the linear ODE system
\begin{equation}\label{inter}
        \begin{aligned}
            \frac{\text{d}}{\text{d}t}
            \begin{bmatrix}
                x \\
                \tilde{x}
            \end{bmatrix}
            &= \begin{bmatrix}
                A + B \tD C & B \tC \\
                \tB C & \tA
            \end{bmatrix}
            \begin{bmatrix}
                x \\
                \tilde{x}
            \end{bmatrix}.
        \end{aligned}
\end{equation}

Similarly, the dynamics of the interconnection between $\Delta$ and the original, possibly nonlinear, exposed system \eqref{nonlin} are given by
\begin{equation}\label{nonlin_inter}
        \begin{aligned}
            \dot x &= f(x,  \tD Cx + \tC \tilde{x}) \\
            \dot{\tilde{x}} &=  \tB C x +  \tA \tilde{x}.
        \end{aligned}
\end{equation}

Since \eqref{inter} is the linearization of \eqref{nonlin_inter} about $(x,\tilde x) = (0,0)$, it follows from Theorem \ref{stability_thm} that the origin is an unstable equilibrium for \eqref{nonlin_inter} whenever the 2x2 block matrix in \eqref{inter} has an eigenvalue with positive real part. The LTI destabilization problem can therefore be reduced to choosing matrices $\tA, \tB, \tC, \tD$ to meet this objective. 

\begin{rmk}
    An eigenvalue of \eqref{inter} with non-negative real part is guaranteed to excite the nominal states, since $\Delta$ is internally stable. 
\end{rmk}

Using a block matrix determinant formula, the characteristic polynomial $P(s)$ of the dynamics in \eqref{inter} is easily described in terms of the transfer functions $G(s)$ and $\Delta(s)$ (see e.g. \cite{paper_char_poly}). Let  $p_A(s)$ and $p_{\tA}(s)$ denote the characteristic polynomials of $A$ and $\tA$. Then
\begin{equation}\label{char_poly}
    P(s) =  p_A(s)p_{\tilde{A}}(s)\det (I-\Delta(s)G(s)).
\end{equation}

Therefore, an eigenvalue $\lambda$ with $\Re(\lambda) \ge 0$ can be assigned by choosing $\Delta$ so that $$\det (I-\Delta(\lambda)G(\lambda)) = 0.$$ 

While many systems $\Delta(s)$ can accomplish this, it is also possible to ensure that the interaction of $\Delta$ with the exposed system is ``stealthy" or small in a well-defined sense. A natural measure of the adversary's footprint is the peak gain introduced by their system 
\begin{equation}
    ||\Delta || = \sup _{r \neq 0} \frac{||w||_{\text{out}}}{||r||_{\text{in}}}
\end{equation}
where $||\cdot ||_{\text{in}}$ and $||\cdot ||_{\text{out}}$ are norms on the input and output signals, respectively. When $||\cdot ||_{\text{in}} = ||\cdot ||_{\text{out}} = ||\cdot ||_{L^2}$, the induced system norm is known as the $H_\infty$ norm, written $||\Delta||_{H_\infty}.$ It can be conveniently expressed in terms of the transfer function:
$$ ||\Delta||_{H_\infty} =  \sup_{\Re(s) \ge 0} ||\Delta(s)||_2 = \sup_{w \in \mathbb{R} } ||\Delta(jw)||_2.$$ 

This norm is especially important, owing to the following version of the small gain theorem for unstructured perturbations (see e.g. Ch. 7 of \cite{DahlehTextbook}).
\begin{thm}\label{thm:small-gain}
    For all asymptotically stable LTI systems $\Delta$ satisfying $$||\Delta||_{H_\infty} < \frac{1}{||G||_{H_\infty}},$$ the feedback interconnection between $G$ and $\Delta$ is internally stable. 
\end{thm}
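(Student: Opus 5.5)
The plan is to argue through the characteristic polynomial \eqref{char_poly}. Since $A$ and $\tA$ are assumed Hurwitz, the factors $p_A(s)$ and $p_{\tA}(s)$ have no zeros in the closed right half plane. Internal stability of the interconnection \eqref{inter} is therefore equivalent to showing that $\det(I-\Delta(s)G(s))$ has no zeros with $\Re(s)\ge 0$. Because $D=0$, we have $G(\infty)=0$, so well-posedness \eqref{WP} holds automatically; the general case only needs \eqref{WP}, which the small gain bound gives for free since $\|\tD D\|_2<1$. Once every root of $P(s)$ lies in the open left half plane, the block matrix in \eqref{inter} is Hurwitz, and this is exactly internal stability of the linear interconnection.

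The key estimate comes first. Fix $s$ with $\Re(s)\ge 0$. Both $G$ and $\Delta$ are stable, so each is analytic on the closed right half plane. By the maximum modulus principle, or simply by the stated characterization of the $H_\infty$ norm as a supremum over $\Re(s)\ge 0$, we get $\|G(s)\|_2\le\|G\|_{H_\infty}$ and $\|\Delta(s)\|_2\le\|\Delta\|_{H_\infty}$. Submultiplicativity then gives
\[
\|\Delta(s)G(s)\|_2 \le \|\Delta\|_{H_\infty}\|G\|_{H_\infty} < 1 .
\]
Consequently every eigenvalue $\mu$ of $\Delta(s)G(s)$ satisfies $|\mu|<1$. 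Hence $1$ is not an eigenvalue, and $\det(I-\Delta(s)G(s))\neq 0$. The standing assumption that $G\not\equiv 0$ ensures that $1/\|G\|_{H_\infty}$ is finite and the hypothesis is meaningful. If $G\equiv 0$, the claim is trivial anyway, because the interconnection is then block triangular with Hurwitz diagonal blocks.

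It remains to connect $P(s)$ with the closed-loop matrix, and this is the only step needing care. Formula \eqref{char_poly} is stated as valid, but it is an identity of rational functions: it holds for $s$ away from $\mathrm{spec}(A)\cup\mathrm{spec}(\tA)$, which by assumption excludes the whole closed right half plane. So for every $s$ with $\Re(s)\ge0$, we have $P(s)=p_A(s)p_{\tA}(s)\det(I-\Delta(s)G(s))\neq 0$, and no right-half-plane eigenvalue exists.

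I would double-check one point: that no cancellation can hide a right-half-plane root. This is automatic here, because we evaluate $P$ pointwise rather than reading off zeros of a reduced transfer function. I expect the main subtlety to be this bookkeeping, together with justifying the pointwise bound on the closed half plane rather than only on the imaginary axis. If one prefers to use only the axis characterization $\sup_\omega\|\Delta(j\omega)\|_2$, the alternative is to apply the maximum modulus principle to $u^*\Delta(s)G(s)v$ for unit vectors $u,v$. These functions are bounded and analytic on $\Re(s)>0$ and continuous up to the axis, and they tend to $0$ at infinity since $G(\infty)=0$, so the bound extends to the closed half plane.
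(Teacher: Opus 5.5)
Your proof is correct. The paper gives no proof of this theorem and simply cites Ch.~7 of the Dahleh textbook. Your argument is the standard determinant-based sufficiency proof, and it fits the paper's own setup, since \eqref{char_poly} is introduced precisely to tie eigenvalues of \eqref{inter} to zeros of $\det(I-\Delta G)$. The steps all hold:
\begin{itemize}
\item evaluating \eqref{char_poly} pointwise on $\{\Re(s)\ge 0\}$ is legitimate, because $p_A p_{\tA}\neq 0$ there;
\item the bound $\|\Delta(s)G(s)\|_2\le\|\Delta\|_{H_\infty}\|G\|_{H_\infty}<1$ follows either from the paper's closed-half-plane characterization of the norm or from your maximum-modulus argument (which uses $G(\infty)=0$);
\item a spectral radius below $1$ excludes the eigenvalue $1$, so $P(s)\neq 0$ on the closed right half plane.
\end{itemize}

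Compared with an operator-theoretic small gain argument (a Neumann series for $(I-\Delta G)^{-1}$ in an induced norm), your route needs $\Delta$ to be LTI. In exchange, it lands directly on Hurwitz stability of the matrix in \eqref{inter}, for any realization with $A$ and $\tA$ Hurwitz. That is exactly what the paper needs later for Lyapunov's indirect method. The operator route handles nonlinear or time-varying $\Delta$, but it yields input--output stability and needs an extra stabilizability/detectability step to reach internal stability.

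One small inaccuracy: $G\equiv 0$ does not make \eqref{inter} block triangular. For example, you can have $B\neq 0$ and $C\neq 0$ with $C(sI-A)^{-1}B\equiv 0$ because of uncontrollable or unobservable modes. Your main argument still covers this case, since then $\det(I-\Delta G)\equiv 1$ and $P=p_A p_{\tA}$.
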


By internally stable, we mean that any realization of the interconnection (e.g. \eqref{inter} is asymptotically stable. Consequently, it is natural to quantify the adversary's impact by its $H_\infty$ norm. In particular, the stealthiest possible attacks would have size 
$$||\Delta||_{H_\infty} = \frac{1}{||G||_{H_\infty}}.$$

The catch is that such a $\Delta$ necessarily produces an eigenvalue on the imaginary axis and not at any point in $\{\Re(s) > 0\}$. To see this, consider the family of rational functions
$$P_\tau(s) = p_A(s)p_{\tilde{A}}(s)\det (I-\tau\Delta(s)G(s))$$
where $\tau\in \mathbb{R}$. All the roots of $P_0$ have negative real part. Suppose $P_1 = P$ has a root with positive real part. By continuity, there must be a $\tau_0$ with $0 < \tau_0 < 1$  such that $P_{\tau_0}$ and therefore $\det (I-\tau_0\Delta G)$ have a root on the imaginary axis. Consequently, $\tau_0 \Delta$ is destabilizing, contradicting minimality of $\Delta.$ These considerations motivate the following definition:

\begin{defn}\label{def:attack}
    A minimal destabilizing feedback attack on the system $G$ is a stable LTI system $\Delta$ satisfying
    \begin{itemize}
        \item[(i)] $\det (I - \Delta(jw_0) G(jw_0) )= 0$, for some $\omega_0 \in \mathbb{R}$
        \vspace{0.1in}
        \item[(ii)] $||\Delta||_{H_\infty} = \frac{1}{||G||_{H_\infty}}$
    \end{itemize}
\end{defn}
We refer to (i) as the destabilization condition and to (ii) as the minimality condition. Note carefully that, due to the purely imaginary eigenvalue, a minimal destabilizing feedback attack \textit{need not destabilize the original nonlinear system.} See Section \ref{sec_example} for an example.

\section{DESTABILIZATION ANALYSIS}
In view of Theorem \ref{stability_thm}, guaranteed destabilization of the  exposed system \eqref{nonlin} by feedback requires that at least one eigenvalue is moved strictly to the right of the imaginary axis. In what follows, we prove that this is always possible by slightly perturbing a  minimal feedback attack $\Delta$. The results follow from linear perturbation theory and will be stated in slightly more generality than is required.

We will use the following basic result of linear algebra. Let $||\cdot||$ be any induced matrix norm.
\begin{lemma}\label{lemma:matrix}
    If $M \in \mathbb{C}^{m \times m}$ has $||M|| = 1$ and $\det(I-M) = 0$, then the eigenvalue at $\lambda=1$ is semi-simple. 
\end{lemma}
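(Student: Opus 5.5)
The plan is to show directly that $\ker (I-M)^2 = \ker (I-M)$, which is equivalent to semi-simplicity of the eigenvalue $1$. That is, the eigenvalue $1$ has no Jordan block of size larger than one. We argue by contradiction using the power-boundedness that comes from $\|M\| = 1$.

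\textbf{Step 1 (contrapositive setup).} Suppose the eigenvalue $1$ is not semi-simple. Then some Jordan block for $\lambda = 1$ has size at least $2$. Equivalently, there exist vectors $v \neq 0$ and $u$ with
\begin{equation*}
(M - I)v = 0, \qquad (M-I)u = v.
\end{equation*}
So $Mv = v$ and $Mu = u + v$.

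\textbf{Step 2 (growth of powers).} By induction, $M^k u = u + k v$ for every $k \ge 1$. Indeed, $M^{k+1}u = M(u + kv) = u + v + kv$. Hence
\begin{equation*}
\|M^k u\| \ge k\|v\| - \|u\| \to \infty.
\end{equation*}
Here $\|\cdot\|$ denotes the vector norm that induces the given matrix norm.

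\textbf{Step 3 (contradiction).} Submultiplicativity of induced norms gives $\|M^k u\| \le \|M\|^k \|u\| = \|u\|$. This bound is uniform in $k$, which contradicts Step 2. Therefore every Jordan block for $\lambda = 1$ is $1\times 1$, i.e.\ the eigenvalue $1$ is semi-simple. Its existence is guaranteed by $\det(I-M)=0$.

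There is no real obstacle here. The only point needing care is that the norm is an \emph{induced} norm, so that $\|M^k x\| \le \|M\|^k\|x\|$ holds. The same argument in fact shows that every eigenvalue of modulus $1$ of a contraction is semi-simple, which may be useful later for the perturbation argument near $s = j\omega_0$.
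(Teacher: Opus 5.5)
Your proposal is correct and is essentially the paper's argument: a Jordan chain $(M-I)v=0$, $(M-I)u=v$ forces $M^k u = u + kv$ to grow linearly, which contradicts the bound $\|M^k u\| \le \|M\|^k\|u\| = \|u\|$ coming from the induced norm. Your setup of the chain (asserting the existence of \emph{some} pair $v \neq 0$, $u$) is slightly more careful than the paper's ``let $v$ be an eigenvector,'' since not every eigenvector need lie in the range of $M-I$.
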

\begin{proof}
    Let $v$ be an eigenvector for $\lambda=1$. If the eigenvalue is defective, then there exists a nonzero vector $w \in\mathbb{C}^m$ with $(M - I)w = v.$ Therefore $Mw = w + v$ and by induction $M^pw = w + p v$ for any positive integer $p.$ Taking norms of both sides gives $||w|| \ge || M^p w|| \ge p||v|| - ||w||$. Send $p \to \infty$ for a contradiction.
\end{proof}

We can apply Lemma \ref{lemma:matrix} with $M = \Delta(j\omega_0) G(j\omega_0)$, where $\Delta, \,\omega_0$ are as in Definition \ref{def:attack}. Since the eigenvalue $\lambda=1$ must be semi-simple, linear perturbation theory (see e.g. \cite{KatoPerturbation}), states that there are holomorphic functions $\lambda_1(s), \dots, \lambda_r(s)$ defined on a neighborhood of $s=j\omega_0$ such that each $\lambda_k(j\omega_0) = 1$ and such that each $\lambda_k(s)$ remains an eigenvalue of $M(s) := \Delta(s)G(s)$. In other words, if the eigenvalue at $s=j\omega_0$ is repeated $r$ times, then there exist $r$ holomorphic eigenvalue branches near this point. 

We pause to point out that the eigenvalues of a rational transfer function may fail to be differentiable in general, even at a peak frequency. For example, the transfer function
$$H(s) = 
\frac{1}{(s+1)^2}
\begin{bmatrix}
    0 & 1 \\ 
    s & 0
\end{bmatrix}$$
satisfies $||H||_{H_\infty} = ||H(0)||_2 = 1$. But the eigenvalues $$\lambda_{\pm}(s) = \pm\frac{\sqrt{s}}{(s+1)^2}$$ are not differentiable at $s=0$. Indeed, $I - H(0)$ is invertible, and Lemma \ref{lemma:matrix} is not applicable.

On the other hand, the matrix $M(s) = \Delta(s) G(s)$ can exhibit no such behavior, due to the semi-simple eigenvalue of $\lambda=1$. This regularity result allows us to prove the following theorem.

\begin{thm}\label{main}
Suppose that $M(s)$ is a square matrix of holomorphic functions on an open subset of the closed right half plane with $$\sup_{\text{\textup{Re}\,}s \,\ge 0} ||M(s)|| \le 1,$$ and that there exists $\omega_0 \in \mathbb{R}$ for which $I-M(j\omega_0)$ is singular. Either 
$$ \det ( I - M(s) ) = 0 \,\,\,\, \text{for all } \,\,\text{\textup{Re}}\, s \ge 0,$$
or there exists $\epsilon_0>0$ and a real analytic curve \newline $z: (-\epsilon_0, \epsilon_0) \to \mathbb{C}$ with 
    \begin{itemize}
        \item $z(0) = j\omega_0$.
        \item $\det [I-(1+\epsilon) M(z(\epsilon))] = 0$ for all $ \epsilon \in (-\epsilon_0, \epsilon_0)$.
        \item $\text{\textup{Re}} \,(z(\epsilon)) > 0$ for all $ \epsilon \in (0, \epsilon_0)$.
    \end{itemize}

\end{thm}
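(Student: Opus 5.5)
The plan is to use the holomorphic eigenvalue branches of $M(s)$ near $s_0 := j\omega_0$ that the paper obtains from Lemma \ref{lemma:matrix} together with linear perturbation theory, and to apply the implicit function theorem to one branch. Since $I-M(j\omega_0)$ is singular, $\lambda=1$ is an eigenvalue of $M(j\omega_0)$, and $\|M(j\omega_0)\|\le 1$. The spectral radius is at most the induced norm, so any eigenvalue has modulus $\le 1$; and if $\|M(j\omega_0)\|<1$ then $1$ could not be an eigenvalue. Hence $\|M(j\omega_0)\|=1$ and Lemma \ref{lemma:matrix} applies. The eigenvalue $1$ is therefore semi-simple, and we get holomorphic branches $\lambda_1,\dots,\lambda_r$ on a neighborhood $U$ of $j\omega_0$ with $\lambda_k(j\omega_0)=1$. (If $j\omega_0$ lies on the boundary of the domain, I would use that $M$ is holomorphic on an open set containing it.) On $U$ we have $\det(I-(1+\epsilon)M(s))=0$ exactly when $(1+\epsilon)\lambda_k(s)=1$ for some $k$.

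The argument then splits according to whether every branch is constant. \emph{Case 1:} each $\lambda_k\equiv 1$ near $j\omega_0$. Then $\det(I-M(s))$ vanishes on an open set, so by the identity theorem it vanishes on the connected component of the domain. I would assume the domain is connected, or more precisely that it contains the closed right half plane as a connected set, which yields the first alternative. \emph{Case 2:} some branch $\lambda=\lambda_k$ is nonconstant. I then use the maximum modulus constraint. For $\Re s\ge 0$ near $j\omega_0$, $|\lambda(s)|\le\|M(s)\|\le 1$, while $|\lambda(j\omega_0)|=1$. So $j\omega_0$ is a boundary maximum of $|\lambda|$ on the half-disc.

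Write $\lambda(s)=1+a(s-j\omega_0)^q+O(|s-j\omega_0|^{q+1})$ with $a\neq 0$ and $q\ge1$ the first nonvanishing order. I would show $q=1$. For $q\ge 2$, the directions $e^{i\theta}$ with $\theta\in[-\pi/2,\pi/2]$ cover an arc of angles of $a e^{iq\theta}$ of length $q\pi\ge 2\pi$. Some direction into the closed right half plane would then give $\Re(a e^{iq\theta})>0$, hence $|\lambda|>1$, a contradiction. For $q=1$ the same reasoning forces $\Re(a e^{i\theta})\le 0$ for all $|\theta|\le\pi/2$, so $a$ is a negative real number: $\lambda'(j\omega_0)=a<0$. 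This step, ruling out higher-order tangency and pinning down the sign of $\lambda'$, is the crux and the one I expect to be the main obstacle. A refinement with the next-order terms may be needed if the boundary case $\theta=\pm\pi/2$ gives only weak inequalities.

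Now solve $(1+\epsilon)\lambda(z)=1$, i.e.\ $F(z,\epsilon):=\lambda(z)-\frac{1}{1+\epsilon}=0$. Since $\partial_zF(j\omega_0,0)=a\neq0$, the holomorphic implicit function theorem gives an analytic solution $z(\epsilon)$ with $z(0)=j\omega_0$. Its restriction to real $\epsilon\in(-\epsilon_0,\epsilon_0)$ is real analytic. Differentiating gives $z'(0)=\frac{1}{a}\cdot\frac{d}{d\epsilon}\big(\tfrac{1}{1+\epsilon}\big)\big|_0=-1/a>0$, which is real and positive. Hence $\Re z(\epsilon)=\epsilon(-1/a)+O(\epsilon^2)>0$ for small $\epsilon>0$, after shrinking $\epsilon_0$. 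Finally, $\det[I-(1+\epsilon)M(z(\epsilon))]=0$ because $1/(1+\epsilon)$ is an eigenvalue of $M(z(\epsilon))$.
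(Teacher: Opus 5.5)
Your outline is the paper's proof. Lemma~\ref{lemma:matrix} gives semisimplicity. You take a holomorphic branch $\lambda$ with $\lambda(j\omega_0)=1$, solve $(1+\epsilon)\lambda(z)=1$ by the implicit function theorem, and get $\dot z(0)=-1/\lambda'(j\omega_0)>0$. The paper uses the anti-calculus proposition for $\lambda'(j\omega_0)\neq 0$ and then Cauchy--Riemann for the sign; your first-order sector argument does both at once and is correct. The refinement you anticipate is unnecessary: $\mathrm{Re}(a e^{j\theta})\le 0$ at $\theta=0,\pm\pi/2$ already forces $\mathrm{Re}\,a\le 0$ and $\mathrm{Im}\,a=0$, so $a<0$. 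You also rightly check $\|M(j\omega_0)\|=1$, which the paper leaves implicit.

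The genuine gap is in the step you take for granted, which the paper also asserts: semisimplicity of the eigenvalue $1$ does not give holomorphic eigenvalue branches. Perturbation theory for a semisimple eigenvalue only yields Puiseux branches $1+\mu(s-j\omega_0)+o(s-j\omega_0)$, and the contraction bound does not restore holomorphy. Take $\psi(s)=\eta s/(s+1)$ with $0<\eta\le 1/\sqrt2$ and $M(s)=(1+2s)^{-1}\begin{bmatrix}1&\psi^2\\ \psi^3&1\end{bmatrix}$. Then $\sup_{\mathrm{Re}\,s\ge 0}\|M(s)\|_2\le 1$, $M(0)=I$ and $\det(I-M)\not\equiv 0$. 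Yet the eigenvalues $(1\pm\psi^{5/2})/(1+2s)$ branch at $s=0$.

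Worse, suppose $z(\epsilon)$ is holomorphic with $z(0)=0$ and $\det[I-(1+\epsilon)M(z(\epsilon))]\equiv 0$, i.e.\ $(2z-\epsilon)^2=(1+\epsilon)^2\psi(z)^5$. Then $2\,\mathrm{ord}(2z-\epsilon)=5\,\mathrm{ord}\,z$, so $\mathrm{ord}\,z$ is even, hence at least $2$. That gives $\mathrm{ord}(2z-\epsilon)=1$ and $2=5\,\mathrm{ord}\,z$, which is absurd. So the two-sided real-analytic curve in the statement need not exist, and no argument through a holomorphic branch can produce it for general $M$.

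What survives is the one-sided conclusion, which is all Corollary~\ref{cor:destab} uses. In this example the roots are $z(\epsilon)=\epsilon/2+O(\epsilon^{5/2})$, with positive real part for small $\epsilon>0$. You can recover this in general in two steps. First, run your sector argument on Puiseux branches to show the leading coefficient is some $\mu<0$. Then apply Rouch\'e on a small disc about $j\omega_0-\epsilon/\mu$. Your proof works verbatim whenever a holomorphic branch does exist. One case is the rank-one attacks $\Delta=\sigma_1^{-1}ab^T$ of Section~\ref{sec_constr}, where $M=\Delta G$ has the single nonzero eigenvalue $\sigma_1^{-1}b^TGa$.
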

\begin{proof}
    Suppose that $\det(I - M(s))$ is not zero everywhere on the closed right half plane. We may assume that $\omega_0=0$, else we consider $N(s) = M(s+j\omega_0).$ Let $\lambda$ be any one of the holomorphic eigenvalue branches, defined on a open ball $B \subset \mathbb{C}$ centered at $s=0$.
    
    The strategy is to apply the (analytic) implicit function theorem to the function $f : \mathbb{R} \times B \to \mathbb{C}$ given by $$f(\epsilon,s) = (1+\epsilon)\lambda(s) - 1.$$ 
    Note that $f(0,0) = 0$ and that $f(\epsilon,s) = 0$ implies that $I - (1+\epsilon) M(s)$ is singular. 
    
    To apply the implicit function theorem, we need to prove that $f_s(0,0) \neq 0$ or equivalently that $\lambda'(0) \neq 0$. To see this, observe that when $s \in B\cap \{ \Re(s) \ge 0\}$, we have $$ |\lambda(s)| \le \sup_{\Re(s) \ge 0} ||M(s)|| \le 1 $$ 
    by assumption. Therefore, $$ |\lambda(0) | = \max_{B\cap \{\Re(s)\ge 0\}}|\lambda(s)| = 1$$ is obtained on the boundary. By the anti-calculus proposition of complex analysis, we have that $\lambda'(0) \neq 0$ unless $\lambda(s)$ is constant on an open subset $U$ of this half-disk. The latter would mean that $\det(I-M(s)) \equiv 0$ on $U$, which forces $\det(I-M(s)) \equiv 0$ on the closed right half plane. This is ruled out by assumption.

    It follows that the implicit function theorem furnishes an $\epsilon_1 > 0$ and a (unique) curve $z : (-\epsilon_1, \epsilon_1) \to \mathbb{C}$ satisfying $f(\epsilon, z(\epsilon)) = 0$ for all $|\epsilon| < \epsilon_1$. Additionally, $z$ solves the differential equation
    \begin{equation}\label{zdot}
    \dot z(\epsilon) = - \frac{f_\epsilon(\epsilon, z(\epsilon))}{f_s(\epsilon, z(\epsilon))} = -\frac{1}{1+\epsilon} \frac{1}{\lambda'(s)}, \quad z(0) = 0.
    \end{equation}

    We claim that $\Re(z(\epsilon)) > 0$ when $\epsilon > 0$ is small enough. This can shown with first derivative techniques. From \eqref{zdot} we have $\dot z (0)  = -\frac{1}{\lambda'(0)}.$ To analyze this quantity, decompose $\lambda(s) = u (s) + j v(s) = u(x,y) + j v(x,y)$ into real and imaginary parts. Observe that $u(0,0) = 1$ and $v(0,0) = 0$. By the Cauchy-Riemann equations,
    \begin{equation*}
        \begin{aligned}
            \lambda'(0) &= u_x(0,0) + j v_x(0,0) \\
            &= u_x(0,0) - j u_y(0,0) \\
            &= u_x(0,0).
        \end{aligned}
    \end{equation*}
    
    The final equation follows from the fact that the real-valued function $u(0,y) = \Re(\lambda(jy))$ has a local maximizer at $y = 0$. For all small $x \ge 0$ the maximum principle gives
    \begin{equation*}
        \begin{aligned}
            u^2(x,0) + v^2(x,0) &= 
            |\lambda(x)|^2 \\
            &\le \sup_{B  \,\cap \, \Re(s) \ge 0} |\lambda(s)|^2 \\
            &= |\lambda(0)|^2 \\
            &= u^2(0,0) + v^2(0,0).
        \end{aligned}
    \end{equation*}
    This means that
    \begin{equation*}
        \begin{aligned}
            0 &\ge \partial_x [u^2(x,0) + v^2(x,0)]\\ 
            &= 2u(x,0) \cdot u_x(x,0) + 2v(x,0) \cdot  v_x(x,0) \\ 
            &= 2 u_x(0,0)
        \end{aligned}
    \end{equation*}
    at $x=0$. Thus $\lambda'(0) = u_x(0,0) \le 0$. Since we already showed that it cannot be zero, the only option is that $\lambda'(0) < 0$. From \eqref{zdot} we conclude that $\dot{z}(0) > 0$. Hence $\Re(z(\epsilon))$ is strictly increasing for sufficiently small $\epsilon > 0$, with $\Re(z(0)) = 0$. The result follows.
\end{proof}

\begin{cor}\label{cor:destab}
     Given the asymptotically stable LTI system $G$ and an $H_\infty$ minimal destabilizing feedback attack $\Delta$ on $G$, the feedback interconnection between $G$ and $\Delta_\epsilon := (1+\epsilon)\Delta$ is well posed and internally unstable for all sufficiently small $\epsilon>0$. Consequently, the feedback interconnection of the exposed system \eqref{nonlin} with $\Delta_\epsilon$ is well-posed and has the  origin as an unstable equilibrium point for all sufficiently small $\epsilon>0$. 
\end{cor}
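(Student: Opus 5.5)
The plan is to apply Theorem \ref{main} to the normalized loop matrix
\[ M(s) := \Delta(s)G(s) \]
and then pass from the linear interconnection to the nonlinear one using Theorem \ref{stability_thm}.

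\textbf{Step 1: checking the hypotheses of Theorem \ref{main}.} Since $A$ and $\tA$ are Hurwitz, $G$ and $\Delta$ are rational with no poles in an open set containing the closed right half plane, so $M$ is holomorphic there. Submultiplicativity of the induced $2$-norm and the minimality condition (ii) of Definition \ref{def:attack} give
\[ \sup_{\Re s\ge 0}\|M(s)\|_2 \le \|\Delta\|_{H_\infty}\|G\|_{H_\infty} = 1, \]
and condition (i) gives singularity of $I-M(j\omega_0)$.

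\textbf{Step 2: ruling out the degenerate alternative.} Theorem \ref{main} also allows $\det(I-M(s))\equiv 0$ on the closed right half plane, which I must exclude. Since $\det(I-M(s))$ is a rational function, it would then vanish identically. Because $D=0$, we have $G(s)\to 0$ as $|s|\to\infty$ while $\Delta$ stays bounded, so $\det(I-M(s))\to 1$, a contradiction. In the general case $D\neq0$, the well-posedness condition \eqref{WP} gives $\det(I-\Delta(\infty)G(\infty))\neq 0$ and serves the same purpose. I expect this step to be the main subtle point, since it is exactly where well-posedness enters.

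\textbf{Step 3: linear instability.} Theorem \ref{main} now supplies $\epsilon_0>0$ and a curve $z(\epsilon)$ with $\Re z(\epsilon)>0$ and $\det(I-(1+\epsilon)M(z(\epsilon)))=0$ for $0<\epsilon<\epsilon_0$. Note that $(1+\epsilon)M = \Delta_\epsilon G$, and that $\Delta_\epsilon$ has realization $(\tA,\tB,(1+\epsilon)\tC,(1+\epsilon)\tD)$, which is still internally stable. By \eqref{char_poly}, with $p_A(z)p_{\tA}(z)\neq 0$ in the right half plane, $z(\epsilon)$ is a root of the characteristic polynomial of the block matrix in \eqref{inter} with $\Delta$ replaced by $\Delta_\epsilon$. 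That matrix therefore has an eigenvalue with positive real part, so the linear interconnection is internally unstable.

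\textbf{Step 4: well-posedness and nonlinear instability.} Well-posedness for small $\epsilon$ holds trivially when $D=0$. Otherwise it follows by continuity: $\det(I-(1+\epsilon)\tD D)\neq0$ persists for small $\epsilon$, and the strict inequality $\sup\|\tD\,\partial g/\partial u\|<1$ persists after multiplication by $(1+\epsilon)$ for small $\epsilon$, while $\tD=0$ is unaffected. Finally, \eqref{inter} with $\Delta_\epsilon$ is the Jacobian at the origin of the closed-loop nonlinear system \eqref{nonlin_inter} with $\tC,\tD$ scaled by $1+\epsilon$. Since $f$ is $C^1$, the second bullet of Theorem \ref{stability_thm} shows the origin is unstable. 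Taking $\epsilon$ below both thresholds gives the corollary.
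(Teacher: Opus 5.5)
Your proposal is correct and takes essentially the same approach as the paper. You apply Theorem~\ref{main} to $M(s)=\Delta(s)G(s)$, exclude the degenerate case $\det(I-M(s))\equiv 0$ through the well-posedness condition \eqref{WP} (which, as you note, comes down to strict properness of $G$ when $D=0$), obtain well-posedness of the perturbed loops by continuity, and make explicit the hypothesis checks, the use of the characteristic polynomial \eqref{char_poly}, and the final appeal to Theorem~\ref{stability_thm}, all of which the paper leaves implicit.
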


\begin{proof}
This follows from applying Theorem \ref{main} to $M(s) = \Delta(s) G(s)$. Note that the case $\det(I - M(s)) \equiv 0$ is ruled out by our well-posedness assumption \eqref{WP} on the interconnection between $G$ and $\Delta$. Moreover, well-posedness continues to hold for the closed loops by continuity.
\end{proof}

\section{UNSTRUCTURED FEEDBACK ATTACKS}\label{sec_constr}
In this section, we give a detailed construction of unstructured feedback attacks. This construction below is based on the proof of necessity for the small gain theorem in Ch. 9 of \cite{ZhouTextbook}.

\subsection{Single Input, Single Output Case}
When $m = k = 1$, then $G(s)$ is simply a rational function, and $H_\infty$ minimal feedback attacks are relatively simple to construct. The following simple result will be useful.
\begin{lemma}\label{DeltaConstr}
    Let $z$ be a complex number, and let $\omega_0$ be real. There there exists a stable, proper rational function $r(s)$ with real coefficients satisfying
    \begin{itemize}
        \item $r(j\omega_0) = z$
        \item $|r(j\omega)| = |z|$ for all $\omega \in \mathbb{R}$.
    \end{itemize}
    In particular, 
    \begin{itemize}
        \item If $z \in \mathbb{R}$ then we may take $r \equiv z.$
        \item If $z \notin \mathbb{R}$ then we may take 
            \begin{equation}\label{def_alpha_sigma}
                r(s) = \sigma|z|\cdot \frac{s - \alpha}{s + \alpha}, 
            \end{equation}
            where $\sigma = \text{\textup{sgn}} (\omega_0 \, \text{\textup{Im} \,z})$ and $\alpha = \frac{|\omega_0 \text{\textup{Im}}\,z|}{|z| + \sigma \text{\textup{Re}}\,z} > 0.$
    \end{itemize}
\end{lemma}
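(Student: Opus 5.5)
Since the statement prescribes explicit candidates, I will verify them directly. The case $z\in\mathbb{R}$ is immediate: the constant $r\equiv z$ is stable, proper, has real coefficients, and satisfies both conditions trivially. Note also that if $z\notin\mathbb{R}$ then $\text{Im}\,z\neq 0$. If in addition $\omega_0\neq 0$, then $\sigma=\pm1$ is well defined and the numerator $|\omega_0\,\text{Im}\,z|$ of $\alpha$ is positive. The case $z\notin\mathbb{R}$ with $\omega_0=0$ cannot be handled: any real-coefficient rational function takes a real value at $s=0$. I will therefore note in the proof that $\omega_0\neq0$ is implicitly required in the second case.

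For $z\notin\mathbb{R}$, $\omega_0\ne0$, the plan has three checks.

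\emph{Well-definedness and stability.} First I check that $\alpha>0$. The denominator is $|z|+\sigma\,\text{Re}\,z \ge |z|-|\text{Re}\,z|$, which is strictly positive because $\text{Im}\,z\neq 0$. Hence $\alpha>0$, the only pole $s=-\alpha$ lies in the open left half plane, and $r$ is stable and proper with real coefficients $\sigma|z|$ and $\alpha$.

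\emph{All-pass property.} For real $\omega$,
\[
|j\omega-\alpha| = \sqrt{\omega^2+\alpha^2} = |j\omega+\alpha|,
\]
so $|r(j\omega)| = |\sigma|\,|z| = |z|$.

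\emph{Interpolation $r(j\omega_0)=z$.} This is the main computational step. Both sides have modulus $|z|$, so it suffices to match arguments. I will rationalize:
\[
\frac{j\omega_0-\alpha}{j\omega_0+\alpha}
= \frac{(j\omega_0-\alpha)(\alpha-j\omega_0)}{\alpha^2+\omega_0^2}
= \frac{\omega_0^2-\alpha^2+2j\alpha\omega_0}{\alpha^2+\omega_0^2}.
\]
I then need to show
\[
\sigma\,\frac{\omega_0^2-\alpha^2}{\omega_0^2+\alpha^2}=\frac{\text{Re}\,z}{|z|},
\qquad
\sigma\,\frac{2\alpha\omega_0}{\omega_0^2+\alpha^2}=\frac{\text{Im}\,z}{|z|}.
\]

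To verify these I set $a=\text{Re}\,z$, $b=\text{Im}\,z$, $\rho=|z|$. Since $\sigma=\text{sgn}(\omega_0 b)$, we have $|\omega_0 b| = \sigma\omega_0 b$, so $\alpha = \sigma\omega_0 b/(\rho+\sigma a)$. I will use the half-angle identity $b^2=\rho^2-a^2=(\rho-\sigma a)(\rho+\sigma a)$, which gives
\[
\alpha^2 = \omega_0^2\,\frac{\rho-\sigma a}{\rho+\sigma a}.
\]
Substituting:
\begin{itemize}
\item $\omega_0^2+\alpha^2 = \dfrac{2\rho\,\omega_0^2}{\rho+\sigma a}$ and $\omega_0^2-\alpha^2 = \dfrac{2\sigma a\,\omega_0^2}{\rho+\sigma a}$. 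The first ratio is therefore $\sigma\cdot\sigma a/\rho = a/\rho$.
\item $2\alpha\omega_0 = \dfrac{2\sigma\omega_0^2 b}{\rho+\sigma a}$. The second ratio is therefore $\sigma\cdot\sigma b/\rho = b/\rho$.
\end{itemize}
This gives $r(j\omega_0)=z$. The only delicate point is the sign bookkeeping with $\sigma$, which I will handle by writing $|\omega_0 b| = \sigma\omega_0 b$ throughout.
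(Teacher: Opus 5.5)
Your verification is correct. The following all check out:
\begin{itemize}
\item $\alpha>0$ follows from $|z|+\sigma\,\text{Re}\,z\ge |z|-|\text{Re}\,z|>0$.
\item The all-pass property follows from the identity $|j\omega-\alpha|=|j\omega+\alpha|$.
\item The interpolation step via $\alpha^2=\omega_0^2(\rho-\sigma a)/(\rho+\sigma a)$ is right, and writing $|\omega_0 b|=\sigma\omega_0 b$ handles all sign combinations of $\omega_0$ and $\text{Im}\,z$ at once.
\end{itemize}

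The paper gives no proof of this lemma. It calls it a ``simple result'' and only asserts the explicit formula, so your direct computation is exactly the argument left to the reader. There is no competing route to compare.

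Your caveat about $\omega_0=0$ is also right, and it exposes a genuine defect in the statement as written. For $z\notin\mathbb{R}$ and $\omega_0=0$, the formula gives $\sigma=\text{sgn}(0)=0$ and $\alpha=0$, hence $r\equiv 0$ rather than $\alpha>0$. More fundamentally, no stable real-coefficient rational function can take a non-real value at $s=0$, since $r(0)=p(0)/q(0)$ is real.

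This does not damage the paper's later use of the lemma. At $\omega_0=0$ the matrix $G(0)=-CA^{-1}B+D$ is real. In the SISO construction $z=1/G(0)$ is therefore real. In the MIMO construction the singular vectors $u,v$ of $G(0)$ can be chosen real, so the lemma is only ever invoked with real $z$ at that frequency.
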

By stable and proper, we mean that any poles have negative real part and that the denominator has degree at least as large as the numerator. To construct $\Delta$, simply observe that since $G(s)$ is strictly proper, we have
\begin{equation}
    ||G||_{H_\infty} = \sup_{\omega \in \mathbb{R}} |G(j\omega)| = |G(j\omega_0)|,
\end{equation}
for some critical frequency $\omega_0 \in \mathbb{R}$. Now apply Lemma \ref{DeltaConstr} with $z = \frac{1}{G(j\omega_0)}$ to obtain $\Delta(s) := r(s)$. It is clear that $1 - \Delta(j\omega_0) G(j\omega_0) = 0$, and $||\Delta||_{H_\infty} = ||G||_{H_\infty}^{-1}$. Therefore, the destabilization condition and minimality are fulfilled.

In the case that $\Delta(s)$ is the real constant $1/G(j\omega_0)$, we have a stateless realization
\begin{equation} \label{statelessrealize}
    (\tA, \tB, \tC, \tD) = \left([\,], [\,], [\,], \frac{1}{G(j\omega_0)}\right).
\end{equation}

In the case that $G(j\omega_0) \not\in \mathbb{R}$,
\begin{equation*}
    \begin{aligned}
        \Delta(s) &= \frac{\sigma}{|G(j\omega_0)|} \frac{s- \alpha}{s + \alpha} =  \frac{\sigma}{|G(j\omega_0)|} \left(1 - \frac{2\alpha}{s+ \alpha} \right),
    \end{aligned}
\end{equation*}
and we can take, for instance,
\begin{equation}
    (\tA, \tB, \tC, \tD) = \left(-\alpha, 2\alpha, \frac{-\sigma}{|G(j\omega_0)|}, \frac{\sigma}{|G(j\omega_0)|}\right).
\end{equation}

\subsection{Multi Input, Multi Output Case}
For a strictly proper $p \times m$ transfer function $G(s)$, we proceed as follows. As before, let $\omega_0 \in \mathbb{R}$ be a critical frequency. That is,
\begin{equation}
    ||G||_{H_\infty} = \sup_{\omega \in \mathbb{R}} \sigma_{\text{max}}(G(j\omega)) = \sigma_{\text{max}}(G(j\omega_0)).
\end{equation}

Consider the singular value decomposition
\begin{equation}
    G(j\omega_0) = U \Sigma V^*,
\end{equation}
where $U$ is size $p \times p$, $V$ is size $m \times m$, and $\Sigma$ is size $p \times m$ with zeroes away from the main diagonal $\sigma_1, \dots, \sigma_{\text{min}(p,m)}$. Take
\begin{equation}\label{crit_delta}
    \Delta(j\omega_0) = V \tilde\Sigma U^*,
\end{equation}
where $\tilde\Sigma$ is size $m \times p$ with $\tilde\Sigma_{1,1} = \sigma_1^{-1}$, and all other entries are zero. Then
\begin{equation*}
    \begin{aligned}
        I - \Delta(j\omega_0) G(j\omega_0) &= I - V \tilde\Sigma U^*U \Sigma V^* \\
        &= I - V \tilde\Sigma \Sigma V^* \\
        &= V\left(I -
        \begin{bmatrix}
            1 & 0 & \cdots & 0 \\
            0 & 0 & \cdots & 0 \\
            \vdots & \vdots & \ddots & \vdots \\
            0 & 0 & \cdots & 0
        \end{bmatrix}\right)V^*, 
    \end{aligned}
\end{equation*}
which has $\lambda = 0$ as a simple eigenvalue $\lambda=0$ and the only other eigenvalue $\lambda=1$ repeated $m-1$ times. In particular, $I - \Delta(j\omega) G(j\omega_0)$ is singular.

To define $\Delta(s)$ in general, first note that \eqref{crit_delta} can be re-written using the left and right singular vectors $u$ and $v$ of $G(j\omega_0)$ corresponding to $\sigma_1$:
\begin{equation}
    \Delta(j\omega_0) = \frac{1}{\sigma_1} v u^*.
\end{equation}

Write $u = [u_1, \dots, u_p]^T$ and $v = [v_1, \dots, v_m]^T$. Now apply Lemma \ref{DeltaConstr} to get stable, proper rational functions $a_1(s), \dots, a_m(s)$ and $b_1(s), \dots, b_p(s)$ with real coefficients satisfying 
\begin{equation*}
    \begin{aligned}
        a_i(j\omega_0) &= v_i \\
        b_i(j\omega_0) &= u_i^* \\
        |a_i(j\omega)|, |b_i(j\omega)| &\text{ are constant}.
    \end{aligned}
\end{equation*}
Then we define
\begin{equation}
    \Delta(s) = \frac{1}{\sigma_1} a(s) b(s)^T,
\end{equation}
where $a(s)=[a_1(s), \dots, a_m(s)]^T$ and $b(s)=[b_1(s), \dots, b_p(s)]^T$.

We have already shown that $I-\Delta(j\omega_0)G(j\omega_0)$ is singular. Minimality follows from $$||\Delta(s)||_2 \le  \sigma_1^{-1} = ||\Delta(j\omega_0)||_2. $$

\subsection{The Case $D \neq$ 0}
The previous constructions depends on a critical frequency $\omega_0$. A finite critical frequency is guaranteed to exist when $D=0$, but for nonzero $D$, an infinite critical frequency is possible. In the case where there is both a finite and infinite critical frequency, one can use the above construction at a finite critical frequency $\omega_0$. Well-posedness is then maintained by multiplying $\Delta$ with a strictly proper scalar filter $r(s)$ with $r(j\omega_0) = 1$ and $|r(j\omega)| \le 1$. This is accomplished, for instance, by 
\begin{equation*}
    r(s) = \frac{2|\omega_0|s}{(s+ |\omega_0|)^2} \quad\text{ and }\quad r(s) = \frac{1}{s+1}  
\end{equation*}
for the cases $\omega_0 \neq 0$ and $\omega_0 = 0$, respectively.

When there is only a critical frequency at infinity, one can instead construct a feedback attack $\Delta$ which is arbitrarily close to minimal by applying the previous procedures with $\omega_0$ and $\epsilon>0$ satisfying $(1+\epsilon)|| G(j\omega_0)||_2 \ge ||G||_{H_\infty}$. This interconnection is well-posed by construction.

\section{ILLUSTRATIVE EXAMPLE}\label{sec_example}
Consider the nonlinear system 
\begin{equation}\label{ex:nonlin}
    \begin{aligned}
        \dot x_1 &= x_2\\ 
        \dot x_2 &= -x_1 - x_2 - x_2^3 + w\\
        r &= x_2
    \end{aligned}
\end{equation}
The origin is asymptotically stable for the nominal system, owing to the Lyapunov function 
$$V(x) = \frac12 \left(x_1^2+x_2^2\right),$$
where
\begin{equation*}
    \begin{aligned}
        \dot V(x) &= x_1 \dot x_1 + x_2 \dot x_2 \\
        &= x_1 x_2 + x_2\left(-x_1 - x_2 - x_2^3\right) \\
        &=  x_1 x_2  -x_1x_2 - x_2^2 - x_2^4 \\
        & = -x_2^2 - x_2^4 \le 0
    \end{aligned}
\end{equation*}
and $\dot V(x) = 0$ only when $x=0$. The linearization is given by
\begin{equation}
    \begin{aligned}
        \frac{\text{d}}{\text{d}t} 
        \begin{bmatrix}
            x_1 \\
            x_2
        \end{bmatrix} &= 
        \begin{bmatrix}
            0 & 1 \\
            -1 & -1
        \end{bmatrix}
        \begin{bmatrix}
            x_1 \\
            x_2
        \end{bmatrix} + 
        \begin{bmatrix}
            0 \\
            1
        \end{bmatrix} w\\
    r &= 
    \begin{bmatrix}
        0 & 1
    \end{bmatrix}
    \begin{bmatrix}
            x_1 \\
            x_2
    \end{bmatrix}.
    \end{aligned}
\end{equation}
The transfer function 
\begin{equation}
    G(s) = \frac{s}{s^2 + s + 1}
\end{equation}
has critical frequencies $\omega_0 = \pm 1$. Since $G(\pm 1j) = 1$ is real, a minimal destabilizing feedback attack is given by \mbox{$\Delta = \frac{1}{G(1j)} = 1 $} (see \eqref{statelessrealize}). The linear closed loop has the system matrix
\begin{equation*}
    A = 
    \begin{bmatrix}
        0 & 1 \\
        -1 & 0
    \end{bmatrix}
\end{equation*}
with eigenvalues $\lambda = \pm i$.  That is to say, this $\Delta$ in feedback with the linearized system is unstable, as expected. However, the closed loop with $\Delta$ and the nonlinear system \eqref{ex:nonlin} has the dynamics
\begin{equation*}
    \begin{aligned}
        \dot x_1 &= x_2\\ 
        \dot x_2 &=   -x_1 - x_2 - x_2^3 + x_2 \\
        &= -x_1 -x_2^3.
    \end{aligned}
\end{equation*}
This system remains \textit{asymptotically stable}, as the same Lyapunov function attests
\begin{equation*}
    \begin{aligned}
        \dot V(x) &= x_1 x_2 + x_2\left(-x_1  - x_2^3\right)\\
        &=- x_2^4.
    \end{aligned}
\end{equation*}
However, the perturbation $\Delta_\epsilon = (1+\epsilon)\Delta = (1+\epsilon)$ will destabilize the closed loop with \eqref{ex:nonlin}, as Corollary \ref{cor:destab} guarantees. Indeed, the linear closed loop has the matrix
\begin{equation*}
    A = 
    \begin{bmatrix}
        0 & 1 \\
        -1 & \epsilon
    \end{bmatrix},
\end{equation*}
with eigenvalues $\lambda_\pm = \frac12 \left(\epsilon \pm \sqrt{\epsilon^2 - 4}\right)$. Since $\Re \, \lambda_\pm > 0$, Theorem \ref{stability_thm} also guarantees the instability. The same Lyapnuov function fails due to
\begin{equation*}
    \dot V(x) = x_2^2(\epsilon-x_2^2),
\end{equation*}
which is positive for $ 0 < |x_2|< \epsilon^\frac12$.

This demonstrates how a minimal destabilizing $\Delta$ for the {\em linearization} of a nonlinear system may not actually destabilize the nonlinear system itself, while nudging the $\Delta$ by a gain of $1+\epsilon$ is guaranteed to do so. 

\section{CONCLUSION}

In this paper, we assembled existing results from robust control literature into a general framework for the analysis of minimal destabilizing controllers. Additionally, we explored the disparity in instability guarantees between a nonlinear system and its linearization under the influence of a such a controller. We demonstrated that minimal ``destabilizers" can still result in an asymptotically stable closed loop with the nonlinear system. However, we proved that these malicious controllers can be perturbed slightly in order to guarantee instability of the nonlinear closed loop.  Note that this theory can be extended to the case of structured attacks using $\mu$-analysis.

These results may be used to assess the robustness of nonlinear systems to feedback attacks and to identify critical vulnerabilities, leading to more resilient systems. Future work in this direction could explore attack design from incomplete system information, detectability and robustness of destabilizers, and adversarial resource constraints. 

\section{Acknowledgments}
The authors thank Sean Warnick for his mentorship and helpful comments on the manuscript.

\bibliographystyle{IEEEtran}
\bibliography{IEEEexample}
\end{document}